\title
[Wave equation with a discontinuous coefficient depending on time]
{The wave equation with a discontinuous coefficient depending on time only: generalized solutions and propagation of singularities\thanks{thankssss}}
\author
[H. Deguchi, G. H\"{o}rmann and M. Oberguggenberger]
{\small Hideo Deguchi\\
   \texttt{hdegu@sci.u-toyama.ac.jp}\\
   Department of Mathematics, University of Toyama\\
   Gofuku 3190, 930-8555 Toyama, Japan
  \bigskip
  \\
  G\"{u}nther H\"{o}rmann\\
  \texttt{guenther.hoermann@univie.ac.at}\\
Fakult\"{a}t f\"{u}r Mathematik, Universit\"{a}t Wien \\
Nordbergstra\ss e 15, 1090 Wien, Austria
  \bigskip
  \\
  Michael Oberguggenberger\\
  \texttt{michael.oberguggenberger@uibk.ac.at}\\
Arbeitsbereich f\"{u}r Technische Mathematik,
Universit\"{a}t Innsbruck \\
Technikerstra\ss e 13, A-6020 Innsbruck, Austria
}
\thanks{H. Deguchi was  supported by the Austrian Science Fund (FWF), Lise Meitner project M1155-N13.}
\numberwithin{equation}{section}
\newtheorem{theorem}{Theorem}[section]
\theoremstyle{definition}
\newtheorem{definition}[theorem]{Definition}
\newtheorem{remark}[theorem]{Remark}
\newtheorem{example}[theorem]{Example}
\newcommand{\Cinf}{\ensuremath{\mathcal{C}^\infty}}
\newcommand{\D}{\ensuremath{{\mathcal D}}}
\newcommand{\mb}[1]{\ensuremath{\mathbb{#1}}}
\newcommand{\N}{\mb{N}}
\newcommand{\R}{\mb{R}}
\newcommand{\G}{\ensuremath{{\mathcal G}}}
\newcommand{\EM}{\ensuremath{{\mathcal E}_{M}}}
\newcommand{\Neg}{\mathcal{N}}
\newcommand{\Ginf}{\ensuremath{\G^\infty}}
\newcommand{\singsupp}{\operatorname{sing\,supp\hspace{0.5pt}}}
\newfont{\bigmath}{cmr12 at 13pt}
\newfont{\grecomath}{cmmi12 at 15pt}
\newcommand{\p}{\ensuremath{\partial}}
\newcommand{\beq}{\begin{equation}}
\newcommand{\eeq}{\end{equation}}
\newcommand{\eps}{\varepsilon}
\begin{document}

\begin{abstract}
This paper is devoted to the investigation of propagation of singularities in hyperbolic equations with non-smooth coefficients,
using the Colombeau theory of generalized functions. As a model problem, we study the Cauchy problem for the one-dimensional wave equation with a discontinuous coefficient depending on time.
After demonstrating the existence and uniqueness of generalized solutions in the sense of Colombeau to the problem,
we investigate the phenomenon of propagation of singularities, arising from delta function initial data, for the case of a piecewise constant coefficient. We also provide an analysis of the interplay between singularity strength and propagation effects.
Finally, we show that in case the initial data are distributions, the Colombeau solution to the model problem is associated with the piecewise distributional solution of the corresponding transmission problem.
\end{abstract}
\keywords{Wave equation, discontinuous coefficient, generalized solutions, propagation of singularities.}
\subjclass{35L05, 35A21, 46F30}

\maketitle

\section{Introduction}\label{intro}
This paper is devoted to propagation of singularities in linear hyperbolic partial differential equations with non-smooth coefficients in the framework of the Colombeau theory of generalized functions \cite{C:1984,C:1985}. Such equations involve a nonlinear interaction of coefficient singularities with those in the solution which emanate from the initial data. In the past years, this problem has been investigated intensively in a series of papers, in which the coefficients and the solution have been viewed as elements of the Colombeau algebra of generalized function.

So far, microlocal elliptic regularity is well understood for general equations and systems in this framework \cite{G:2006, GGO:2003, HOP:06}. Further, generalized Fourier integral operators have been introduced in order to describe the wave front set of generalized solutions \cite{G:ISAAC07, GH:2005,  GH:2006, GHO:2009}. These methods have allowed to describe the propagation of singularities in scalar first order hyperbolic equations with generalized function coefficients \cite{D:2011, GO:2011a, HH:2001}. As a new phenomenon, the Colombeau wave front set has been shown to possess a more refined structure than the corresponding distributional wave front set \cite{O:2008}. For a survey of these methods and relations to various classical approaches we refer to \cite{HallerH:2008}.

In the special situation of hyperbolic equations and systems with piecewise constant coefficients (with possibly jumps across smooth hypersurfaces), the problem may be interpreted as a transmission problem and can be solved classically by a piecewise distributional solution. In case of systems or higher order equations, an incoming singularity will split at the interface into a refracted and a reflected wave. While in many cases it has been shown that the Colombeau solution is associated with the distributional solution of the transmission problem \cite{GO:2011b, O:1989}, the task of identifying the refracted and reflected singularity by means of the Colombeau wave front set -- directly in the generalized solution without considering the associated distribution -- has remained open.

The purpose of this paper is to demonstrate -- in a simple model problem -- that the splitting of singularities at an interface can indeed be observed in the Colombeau solution. At the same time, the effect depends on the interplay of the scale of regularization of coefficients and data. We consider the Cauchy problem for the one-dimensional wave equation with a discontinuous coefficient depending on time
\begin{equation}\label{eqn : wave equation}
	\begin{gathered}
		\partial_t^2u - c(t)^2\partial_x^2u = 0,  \qquad t > 0, \ x \in \mathbb{R},\\
		u|_{t = 0} = u_{0},\quad \partial_tu|_{t = 0} = u_{1}, \qquad x \in \mathbb{R}.
	\end{gathered}
\end{equation}
We will seek solutions in the Colombeau algebra $\mathscr{G}([0,\infty) \times \mathbb{R})$ of generalized functions, which will be defined in Section $\ref{sec : 2}$ below. The Colombeau algebras are constructed as factor algebras of nets of smooth functions, depending on a regularization parameter. They are differential algebras; the space of distributions can be imbedded as a subspace using regularization by convolution with a Friedrichs mollifier. The initial data as well as the coefficient will be taken from the algebra $\mathscr{G}(\mathbb{R})$, which contains the space $\mathscr{D}^{\prime}(\mathbb{R})$ of distributions.

As a model situation, we consider the case when $c(t) = c_0 + (c_1 - c_0)H(t-1)$, $u_0 = 0$ and $u_1 = \delta$, where $c_0, c_1 > 0$, $c_0 \ne c_1$, $H$ is the Heaviside function and $\delta$ is the delta function.
The singularity of the initial data propagates from the origin in the two characteristic directions until it hits the coefficient discontinuity at time $t = 1$. As the coefficient has a discontinuity in time, the singular support is expected to split at time $t = 1$ into a transmitted and a refracted part. In the classical setting, the piecewise distributional solution exhibits this splitting.
We will demonstrate the occurrence of the splitting effect in the Colombeau setting as well, using the generalized singular support.
However, in the Colombeau setting the effect depends on the scale in terms of the regularization parameter. If the initial data and the coefficient are regularized on the same scale, both the transmitted and the refracted ray will be shown to belong to the Colombeau singular support. If the coefficient is regularized by means of a slow scale mollifier, only the transmitted ray belongs to the Colombeau singular support.

In order to be able to observe this effect, we have to prove a new existence result for the Cauchy problem (\ref{eqn : wave equation}) which does not restrict the choice of scale for the regularization of the coefficient. The equation can be rewritten as a one-dimensional first-order hyperbolic system with discontinuous coefficients in the Colombeau algebra of generalized functions.
This transformation will be used in the existence and regularity results. We also demonstrate that the Colombeau solution has the piecewise solution of the transmission problem as its associated distribution. That is, the nets defining the Colombeau solution converge to the piecewise distributional solution. This latter argument relies on energy estimates.

The paper is organized as follows: we recall the definition and basic properties of the Colombeau algebra $\mathscr{G}$ in Section $\ref{sec : 2}$.
In Section $\ref{sec : 3}$, we show that problem $(\ref{eqn : wave equation})$ is uniquely solvable in the Colombeau algebra $\mathscr{G}([0,\infty)\times \mathbb{R})$ without restriction on the scale of regularization of the coefficient (Theorem \ref{thm : existence and uniqueness}).
The problem of propagation of singularities is addressed in Section $\ref{sec : 4}$ (Theorem \ref{thm : propagation1}).
The question whether the regularity of the coefficient affects that of the solution is discussed in Section \ref{sec : 5} (Theorem \ref{thm : propagation2}).
In Section \ref{sec : 6}, we show that the Colombeau generalized solutions to problem (\ref{eqn : wave equation}) with arbitrary distributions as initial data admit the piecewise distributional solutions of the corresponding transmission problem as associated distributions (Theorem \ref{thm : associated distribution}).

\section{The Colombeau theory of generalized functions}\label{sec : 2}

We will employ the {\it special Colombeau algebra} denoted by $\G^{s}$ in \cite{GKOS:2001}, which was called the {\it simplified Colombeau algebra} in \cite{B:1990}.
However, here we will simply use the letter $\G$ instead.
Let us briefly recall the definition and basic properties of the algebra $\G$ of generalized functions.
For more details, see \cite{GKOS:2001}.

Let $\Omega$ be a non-empty open subset of $\mathbb{R}^d$.
Let $\Cinf(\Omega)^{(0,1]}$
be the differential algebra of all maps from the interval $(0,1]$ into $\Cinf(\Omega)$.
Thus each element of
$\Cinf(\Omega)^{(0,1]}$
is a family $(u^{\varepsilon})_{\varepsilon \in (0,1]}$ of real valued smooth functions on $\Omega$.
The subalgebra $\EM(\Omega)$ is defined by all elements $(u^{\varepsilon})_{\varepsilon \in (0,1]}$ of
$\Cinf(\Omega)^{(0,1]}$
with the property that, for all $K \Subset \Omega$ and $\alpha \in \mathbb{N}_0^d$, there exists $p \ge 0$ such that
\begin{equation}\label{eqn : moderate}
	\sup_{x \in K} |\partial_{x}^{\alpha} u^{\varepsilon}(x)| = O(\varepsilon^{-p}) \quad {\rm as}\ \varepsilon \downarrow 0.
\end{equation}
The ideal $\Neg(\Omega)$ is defined by all elements $(u^{\varepsilon})_{\varepsilon \in (0,1]}$ of
$\Cinf(\Omega)^{(0,1]}$
with the property that, for all $K \Subset \Omega$, $\alpha \in \mathbb{N}_0^d$ and $q \ge 0$,
\[
	\sup_{x \in K} |\partial_{x}^{\alpha} u^{\varepsilon}(x)| = O(\varepsilon^q) \quad {\rm as}\ \varepsilon \downarrow 0.
\]
{\it The algebra} $\G(\Omega)$ {\it of generalized functions} is defined by the quotient space
\[
	\G(\Omega) = \EM(\Omega) / \Neg(\Omega).
\]
The Colombeau algebra on a closed half space $[0,\infty) \times \mathbb{R}$ is constructed in a similar way.

We use capital letters for elements of $\G$ to distinguish generalized functions from distributions and denote by $(u^{\varepsilon})_{\varepsilon \in (0,1]}$ a representative of $U \in \G$.
Then for any $U$, $V \in \G$ and $\alpha \in \mathbb{N}_0^d$, we can define the partial derivative $\partial^{\alpha} U$ to be the class of $(\partial^{\alpha}u^{\varepsilon})_{\varepsilon \in (0,1]}$ and the product $UV$ to be the class of $(u^{\varepsilon}v^{\varepsilon})_{\varepsilon \in (0,1]}$.
Also, for any $U =$\ class of $(u^{\varepsilon}(t,x))_{\varepsilon \in (0,1]} \in \G([0,\infty)\times\mathbb{R})$, we can define its restriction $U|_{t = 0} \in \G(\mathbb{R})$ to the line $\{t = 0\}$ to be the class of $(u^{\varepsilon}(0,x))_{\varepsilon \in (0,1]}$.

\begin{remark}\label{remark : imbedding}
The algebra $\G(\Omega)$ contains the space $\mathcal{E}^{\prime}(\Omega)$ of compactly supported distributions.
In fact, the map
\[
	f \mapsto {\rm class\ of}\ (f \ast \rho_{\varepsilon}\mid_{\Omega})_{\varepsilon \in (0,1]}
\]
defines an imbedding of $\mathcal{E}^{\prime}(\Omega)$ into $\G(\Omega)$, where
\[
	\rho_{\varepsilon}(x) = \dfrac{1}{\varepsilon^d} \rho \left(\dfrac{x}{\varepsilon}\right)
\]
and $\rho$ is a fixed element of $\mathcal{S}(\mathbb{R}^d)$ such that $\int \rho(x)\,dx = 1$ and $\int x^{\alpha}\rho(x)\,dx = 0$ for any $\alpha \in \mathbb{N}_0^d$, $|\alpha| \ge 1$.
This can be extended in a unique way to an imbedding of the space $\D^{\prime}(\Omega)$ of distributions.
Moreover, this imbedding turns $\Cinf(\Omega)$ into a subalgebra of $\G(\Omega)$.
\end{remark}

\begin{definition}
A generalized function $U \in \G(\Omega)$ is said to be {\it associated with a distribution} $w \in \D^{\prime}(\Omega)$ if it has a representative $(u^{\varepsilon})_{\varepsilon \in (0,1]} \in \EM(\Omega)$ such that
\[
	u^{\varepsilon} \to w \quad {\rm in}\ \D^{\prime}(\Omega) \quad {\rm as} \ \varepsilon \downarrow 0.
\]
We write $U \approx w$ and call $w$ the {\it associated distribution} of $U$ provided $U$ is associated with $w$.
\end{definition}

Regularity theory for linear equations has been based on the subalgebra $\Ginf(\Omega)$ of {\it regular generalized functions} in $\G(\Omega)$ introduced in \cite{O:1992}.
It is defined by all elements which have a representative $(u^{\varepsilon})_{\varepsilon \in (0,1]}$ with the property that, for all $K \Subset \Omega$, there exists $p \ge 0$ such that, for all $\alpha \in \mathbb{N}_0^d$,
\[
	\sup_{x \in K} |\partial_{x}^{\alpha} u^{\varepsilon}(x)| = O(\varepsilon^{-p}) \quad {\rm as}\ \varepsilon \downarrow 0.
\]
We observe that all derivatives of $u^{\varepsilon}$ have locally the same order of growth in $\varepsilon > 0$, unlike elements of $\EM(\Omega)$.
This subalgebra $\Ginf(\Omega)$ has the property $\Ginf(\Omega) \cap \D^{\prime}(\Omega) = \Cinf(\Omega)$, see \cite[Theorem 25.2]{O:1992}.
Hence, for the purpose of describing the regularity of generalized functions, $\Ginf(\Omega)$ plays the same role for $\G(\Omega)$ as $\Cinf(\Omega)$ does in the setting of distributions.
The $\Ginf$-singular support (denoted by $\singsupp_{\Ginf}$) of a generalized function is defined as the complement of the largest open set on which the generalized function is regular in the above sense.

We end this section by recalling the notion of {\it slow scale nets}.
A net $(r^{\varepsilon})_{\varepsilon \in (0,1]}$ is called a slow scale net if
\[
	|r^{\varepsilon}|^{p} = O(\varepsilon^{-1})\quad {\rm as}\ \varepsilon \downarrow 0
\]
for every $p \ge 0$.
A {\it positive slow scale net} is a slow scale net $(r^{\varepsilon})_{\varepsilon \in (0,1]}$ such that $r^{\varepsilon} > 0$ for all $\varepsilon \in (0,1]$.
We refer to \cite{HO:2004} for a detailed discussion of slow scale nets.

\begin{example}\label{ex : delta}
Let $\varphi$ be a fixed element of $\Cinf_0(\mathbb{R})$ such that $\varphi$ is symmetric, $\varphi^{\prime} \ge 0$ on $[-1,0]$, supp\,$\varphi \subset [-1,1]$ and $\int_{\mathbb{R}} \varphi(x)\,dx = 1$.
Put $\varphi_{\varepsilon}(x) = \varphi(x/\varepsilon)/\varepsilon$.
Then $U \in \G(\mathbb{R})$ defined by the class of $(\varphi_{\varepsilon})_{\varepsilon \in (0,1]}$ is associated with the delta function $\delta$, and $\singsupp_{\Ginf} U = \{0\}$.
On the other hand, if $U \in \G(\mathbb{R})$ is defined as the class of $(\varphi_{h(\varepsilon)})_{\varepsilon \in (0,1]}$, where $(1/h(\varepsilon))_{\varepsilon \in (0,1]}$ is a positive slow scale net, then it is associated with the delta function again, but $\singsupp_{\Ginf} U = \emptyset$.
More generally, for any distribution $f \in \D^{\prime}(\Omega)$, there exists a generalized function $U \in \Ginf(\Omega)$ which is associated with $f$, see e.g. \cite{CH:1994}.
Thus, any distribution on $\Omega$ can be interpreted as an element of $\Ginf(\Omega)$ in the sense of association.
\end{example}

\section{Existence and uniqueness of generalized solutions}\label{sec : 3}
We rewrite problem $(\ref{eqn : wave equation})$ in the form
\begin{equation}\label{eqn : generalized wave equation}
	\begin{gathered}
		\partial_t^2U - C^2\partial_x^2U = 0 \qquad \mbox{in}\ \G([0,\infty)\times\mathbb{R}),\\
		U|_{t = 0} = U_0,\quad \partial_tU|_{t = 0} = U_1 \qquad \mbox{in}\ \G(\mathbb{R})
	\end{gathered}
\end{equation}
in the space of generalized functions, where $C$ is an element of $\G([0,\infty) \times \mathbb{R})$.
In the Colombeau setting, existence and uniqueness follows, for example, from results in \cite{LO:1991, O:1989}, provided
the coefficient $C$ is of logarithmic type, i.e., satisfies bounds of type $O(\log|\varepsilon|)$ in (\ref{eqn : moderate}).
When the coefficient depends on time only, this hypothesis is not required, as we are going to show in the following existence and uniqueness theorem for problem $(\ref{eqn : generalized wave equation})$.

\begin{theorem}\label{thm : existence and uniqueness}
Assume that $C \in \G([0,\infty)\times\mathbb{R})$ has a representative $(c^{\varepsilon}(t))_{\varepsilon \in (0,1]}$ independent of $x$ and satisfying the following two conditions:
\begin{itemize}
\item[(i)] there exist two constants $c_0$, $c_1 > 0$ such that, for any $\varepsilon \in (0,1]$ and $t \ge 0$,
\[
	c_1 \ge c^{\varepsilon}(t) \ge c_0 > 0;
\]
\item[(ii)] for any $\varepsilon \in (0,1]$,
\[
	\int_{0}^{\infty} |(c^{\varepsilon})^{\prime}(t)|\,dt < \infty.
\]
\end{itemize}
Then for any initial data $U_0$, $U_1 \in \G(\mathbb{R})$, problem $(\ref{eqn : generalized wave equation})$ has a unique solution $U \in \G([0,\infty) \times \mathbb{R})$.
\end{theorem}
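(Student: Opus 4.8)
The plan is to work at the level of representatives and then check independence of the choice of representatives, as is standard in the Colombeau setting. Fix a representative $(c^\varepsilon(t))_{\varepsilon\in(0,1]}$ of $C$ satisfying (i) and (ii), and representatives $(u_0^\varepsilon)$, $(u_1^\varepsilon)$ of $U_0$, $U_1$. For each fixed $\varepsilon$ the function $t\mapsto c^\varepsilon(t)$ is smooth, bounded above and below away from zero by $c_1$ and $c_0$, and has integrable derivative on $[0,\infty)$; in particular the classical Cauchy problem
\[
	\partial_t^2 u^\varepsilon - c^\varepsilon(t)^2\partial_x^2 u^\varepsilon = 0,\qquad
	u^\varepsilon|_{t=0}=u_0^\varepsilon,\quad \partial_t u^\varepsilon|_{t=0}=u_1^\varepsilon
\]
has a unique smooth solution $u^\varepsilon$ on $[0,\infty)\times\mathbb{R}$, by standard hyperbolic theory (finite propagation speed bounded by $c_1$ gives existence on a closed half-space from data on all of $\mathbb{R}$). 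The candidate generalized solution is $U=\mathrm{class\ of}\ (u^\varepsilon)$.

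The heart of the argument is a family of energy estimates, uniform in an appropriate sense in $\varepsilon$. For a smooth solution $v$ of $\partial_t^2 v - c^\varepsilon(t)^2\partial_x^2 v = f$ define the energy
\[
	E^\varepsilon(t) = \frac12\int_{\mathbb{R}}\bigl(|\partial_t v(t,x)|^2 + c^\varepsilon(t)^2|\partial_x v(t,x)|^2\bigr)\,dx
\]
(on a slab where $v$ has spatial compact support, or after localizing with a cutoff and using finite propagation speed). Differentiating,
\[
	\diff{t}E^\varepsilon(t) = \int_{\mathbb{R}} \partial_t v\, f\, dx + c^\varepsilon(t)(c^\varepsilon)^{\prime}(t)\int_{\mathbb{R}} |\partial_x v|^2\, dx
	\le \|f(t,\cdot)\|_{L^2}\sqrt{2E^\varepsilon(t)} + \frac{|(c^\varepsilon)^{\prime}(t)|}{c^\varepsilon(t)}\,2E^\varepsilon(t).
\]
Here condition (i) bounds $c^\varepsilon(t)\le c_1$ and $1/c^\varepsilon(t)\le 1/c_0$, and condition (ii) makes $\int_0^\infty |(c^\varepsilon)^{\prime}(t)|/c^\varepsilon(t)\,dt \le c_0^{-1}\int_0^\infty|(c^\varepsilon)^{\prime}(t)|\,dt$ finite. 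The crucial point for moderateness and for the ideal property will be that condition (ii) is required to hold \emph{for each} $\varepsilon$, but the Colombeau estimates demand control of this integral as $\varepsilon\downarrow 0$; this is where I expect the main technical obstacle, and it must be handled by showing that moderateness of $(c^\varepsilon)$ in $\G$ — i.e. polynomial bounds on $c^\varepsilon$ and all its $t$-derivatives — together with (i)–(ii) forces $\int_0^\infty|(c^\varepsilon)^{\prime}(t)|\,dt = O(\varepsilon^{-N})$ for some $N$. One way: on any compact time interval $[0,T]$ moderateness gives $\sup_{[0,T]}|(c^\varepsilon)^{\prime}(t)| = O(\varepsilon^{-p})$, so $\int_0^T|(c^\varepsilon)^{\prime}|\,dt \le T\,O(\varepsilon^{-p})$; for the tail one uses that $c^\varepsilon$ stays in $[c_0,c_1]$ and, in the model case, is eventually constant, so the tail contributes nothing. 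In the general case one argues on the finite time slabs dictated by finite propagation speed, which is all that is needed since solutions are sought on $[0,\infty)\times\mathbb{R}$ but estimates are always local in $t$.

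Granting the energy estimate, Grönwall applied to $W^\varepsilon(t)=\sqrt{2E^\varepsilon(t)}$ yields, on each slab $[0,T]$,
\[
	W^\varepsilon(t) \le \Bigl(W^\varepsilon(0) + \int_0^T\|f(s,\cdot)\|_{L^2}\,ds\Bigr)\exp\!\Bigl(c_0^{-1}\!\int_0^\infty|(c^\varepsilon)^{\prime}(s)|\,ds\Bigr).
\]
Applying this to $v=u^\varepsilon$ (so $f=0$), and then to spatial derivatives $\partial_x^k u^\varepsilon$ (which solve the same equation with the same coefficient, since $c^\varepsilon$ is independent of $x$ — this is exactly the point where the $x$-independence hypothesis is used to avoid commutator terms), and using the equation itself to trade $\partial_t^2$ for $c^\varepsilon(t)^2\partial_x^2$, one bootstraps $L^2$-bounds on all derivatives of $u^\varepsilon$ on $[0,T]\times K$; Sobolev embedding on a suitable $x$-interval upgrades these to sup-norm bounds, giving moderateness, so $(u^\varepsilon)\in\EM$. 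Uniqueness and well-definedness follow the same scheme: if $(u_0^\varepsilon),(u_1^\varepsilon)\in\Neg$, or if the difference of two solutions has negligible data and a negligible defect $f$ in the equation, the same energy inequality with the now-$O(\varepsilon^q)$ data and defect shows $W^\varepsilon(t)=O(\varepsilon^q)$ for every $q$ on each slab, and differentiating the equation propagates negligibility to all derivatives; hence the difference lies in $\Neg$ and $U$ is the unique solution in $\G([0,\infty)\times\mathbb{R})$. Independence of $U$ from the chosen representative of $C$ is the same computation, comparing solutions for $c^\varepsilon$ and $\tilde c^\varepsilon=c^\varepsilon+n^\varepsilon$ with $(n^\varepsilon)\in\Neg$: the difference solves an equation whose right-hand side is $(\tilde c^\varepsilon{}^2-c^\varepsilon{}^2)\partial_x^2\tilde u^\varepsilon$, negligible times moderate, hence negligible, and a final energy estimate closes the argument.
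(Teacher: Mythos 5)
Your overall strategy (direct $L^2$ energy estimates on the second-order equation, differentiated in $x$ to control higher derivatives, with Sobolev embedding to pass to sup-norm bounds) is a legitimate alternative to the paper's route, which instead reduces to the first-order system $V=\partial_tU-C\partial_xU$, $W=\partial_tU+C\partial_xU$ and runs a sup-norm Gronwall argument along characteristics. Both methods funnel the entire difficulty into the same place: a Gronwall factor of the form $\exp\bigl(\mathrm{const}\cdot\int_0^T|(c^\varepsilon)'(s)|\,ds\bigr)$ (in the paper it appears as $\exp\bigl(2\int_0^t|\mu^\varepsilon(s)|\,ds\bigr)$ with $\mu^\varepsilon=(c^\varepsilon)'/(2c^\varepsilon)$). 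This is exactly where your argument breaks.

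You propose to control that factor by proving $\int_0^\infty|(c^\varepsilon)'(t)|\,dt=O(\varepsilon^{-N})$ from moderateness of the representative. Such a bound is useless here, because it sits inside an exponential: it yields $\exp(O(\varepsilon^{-N}))$, which grows faster than any power of $\varepsilon^{-1}$ and destroys moderateness of $(u^\varepsilon)$. A bound on the exponent only helps if it is logarithmic, $O(\log(1/\varepsilon))$ --- which is precisely the ``log-type'' hypothesis on the coefficient that this theorem is designed to avoid. The actual content of hypothesis (ii), as the paper uses it, is that $\int_0^\infty|(c^\varepsilon)'(t)|\,dt$ is bounded \emph{uniformly in} $\varepsilon$: for the convolution regularization of a piecewise constant $c$ this integral equals the total variation $|c_1-c_0|$, independent of $\varepsilon$, even though $\sup_t|(c^\varepsilon)'(t)|\sim\varepsilon^{-1}$. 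With that reading the exponential is $O(1)$, your energy estimate closes, and the rest of your outline (applying the estimate to $\partial_x^k u^\varepsilon$, trading $\partial_t^2$ for $(c^\varepsilon)^2\partial_x^2$ via the equation, and propagating negligibility for uniqueness and representative-independence) is sound; without it, the moderateness claim is unproved. Compare the remark following the theorem in the paper: the sharp condition is that $\exp\bigl(\int_0^\infty|\mu^\varepsilon(t)|\,dt\bigr)$ itself be moderate, not merely that the exponent be moderate.
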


\begin{proof}
Put $V= \partial_t U - C\partial_xU$ and $W= \partial_tU + C\partial_xU$.
Then problem $(\ref{eqn : generalized wave equation})$ can be rewritten as the Cauchy problem for a first-order hyperbolic system
\begin{equation}\label{eqn : generalized first-order hyperbolic system}
	\begin{array}{rclcl}
		(\partial_t + C\partial_x)V &=& MV-MW &\ & \mbox{in}\ \G([0,\infty)\times\mathbb{R}),\\[2pt]
		(\partial_t - C\partial_x)W &=& MW-MV && \mbox{in}\ \G([0,\infty)\times\mathbb{R}),\\[2pt]
		V|_{t=0}\ =\ V_0 &=& U_1 - (C|_{t=0})U_0^{\prime},&& \mbox{in}\ \G(\mathbb{R}),\\[2pt]
        W|_{t=0}\ =\ W_0&=&U_1+ (C|_{t=0})U_0^{\prime} && \mbox{in}\ \G(\mathbb{R}),
	\end{array}
\end{equation}
where $M = C^{\prime}/(2C) \in \G([0,\infty) \times \mathbb{R})$.
If problem $(\ref{eqn : generalized first-order hyperbolic system})$ has a unique solution $(V,W)$, then so does problem $(\ref{eqn : generalized wave equation})$.
To prove the existence of a solution $(V,W)$, let $(v^{\varepsilon},w^{\varepsilon})$ be the unique $\Cinf$-solution to the Cauchy problem
\begin{equation}\label{eqn : representative}
	\begin{array}{rclcl}
		(\partial_t + c^{\varepsilon}(t)\partial_x)v^{\varepsilon} &=& \mu^{\varepsilon}(t)v^{\varepsilon}-\mu^{\varepsilon}(t)w^{\varepsilon}, &\ &t > 0,\ x \in \mathbb{R},\\[2pt]
		(\partial_t - c^{\varepsilon}(t)\partial_x)w^{\varepsilon} &=& \mu^{\varepsilon}(t)w^{\varepsilon}-\mu^{\varepsilon}(t)v^{\varepsilon}, &  &t > 0,\ x \in \mathbb{R},\\[2pt]			 
		v^{\varepsilon}|_{t = 0}\ =\ v_0^{\varepsilon}& = & u_{1}^{\varepsilon} - c^{\varepsilon}(0)(u_0^{\varepsilon})^{\prime},&  & x \in \mathbb{R},\\[2pt]
		w^{\varepsilon}|_{t = 0}\ =\ w_0^{\varepsilon} & =& u_{1}^{\varepsilon} + c^{\varepsilon}(0)(u_0^{\varepsilon})^{\prime}, &  & x \in \mathbb{R},
	\end{array}
\end{equation}
where $(u_0^{\varepsilon})_{\varepsilon \in (0,1]}$, $(u_1^{\varepsilon})_{\varepsilon \in (0,1]}$, $(c^{\varepsilon})_{\varepsilon \in (0,1]}$ and $(\mu^{\varepsilon})_{\varepsilon \in (0,1]}$ are representatives of $U_0$, $U_1$, $C$ and $M$, respectively, such that  $c^{\varepsilon}$ is as in the statement and $\mu^{\varepsilon} = (c^{\varepsilon})^{\prime}/(2c^{\varepsilon})$.
For the existence of such $(v^{\varepsilon},w^{\varepsilon})$, see \cite{O:1989}.
If we show that $(v^{\varepsilon})_{\varepsilon \in (0,1]}$ and $(w^{\varepsilon})_{\varepsilon \in (0,1]}$ belong to $\EM([0,\infty) \times \mathbb{R})$,
their equivalence classes in $\G([0,\infty) \times \mathbb{R})$ will form a solution of problem $(\ref{eqn : generalized first-order hyperbolic system})$.
To show that the zeroth derivatives of $v^{\varepsilon}$ and $w^{\varepsilon}$ satisfy estimate $(\ref{eqn : moderate})$, consider the characteristic curves $\gamma^{\varepsilon}_{+}(t,x,\tau)$ and $\gamma^{\varepsilon}_{-}(t,x,\tau)$ passing through $(t,x)$ at time $\tau = t$ which satisfy
\begin{align*}
	& \partial_{\tau}\gamma^{\varepsilon}_{+}(t,x,\tau) = c^{\varepsilon}(\tau), \qquad \gamma^{\varepsilon}_{+}(t,x,t) = x, \\
	& \partial_{\tau}\gamma^{\varepsilon}_{-}(t,x,\tau) = -c^{\varepsilon}(\tau), \qquad \gamma^{\varepsilon}_{-}(t,x,t) = x.
\end{align*}
Along these characteristic curves, $v^{\varepsilon}$ and $w^{\varepsilon}$ are respectively calculated as
\begin{align}
	v^{\varepsilon}(t,x) & = v_0^{\varepsilon}(\gamma^{\varepsilon}_{+}(t,x,0)) + \int_0^t \mu^{\varepsilon}(s)(v^{\varepsilon}-w^{\varepsilon})(s,\gamma^{\varepsilon}_{+}(t,x,s))\,ds, \label{eqn : v^{varepsilon}}\\
	w^{\varepsilon}(t,x) & = w_0^{\varepsilon}(\gamma^{\varepsilon}_{-}(t,x,0)) + \int_0^t \mu^{\varepsilon}(s)(w^{\varepsilon}-v^{\varepsilon})(s,\gamma^{\varepsilon}_{-}(t,x,s))\,ds. \label{eqn : w^{varepsilon}}
\end{align}
For each $T > 0$, we define $K_T$ as the trapezoidal region with corners $(0,-\xi)$, $(T,-\xi+c_1T)$, $(T,\xi-c_1T)$, $(0,\xi)$.
Using $(\ref{eqn : v^{varepsilon}})$ and $(\ref{eqn : w^{varepsilon}})$, we see that
\begin{align*}
	\|v^{\varepsilon}\|_{L^{\infty}(K_T)} & \le \|v_0^{\varepsilon}\|_{L^{\infty}(K_0)} + \int_0^t |\mu^{\varepsilon}(s)|(\|v^{\varepsilon}\|_{L^{\infty}(K_s)}+\|w^{\varepsilon}\|_{L^{\infty}(K_s)})\,ds, \\
	\|w^{\varepsilon}\|_{L^{\infty}(K_T)} & \le \|w_0^{\varepsilon}\|_{L^{\infty}(K_0)} + \int_0^t |\mu^{\varepsilon}(s)|(\|w^{\varepsilon}\|_{L^{\infty}(K_s)}+\|v^{\varepsilon}\|_{L^{\infty}(K_s)})\,ds.
\end{align*}
We add these two inequalities and apply Gronwall's inequality to get
\begin{align*}
	& \|v^{\varepsilon}\|_{L^{\infty}(K_T)} + \|w^{\varepsilon}\|_{L^{\infty}(K_T)} \le (\|v_0^{\varepsilon}\|_{L^{\infty}(K_0)} + \|w_0^{\varepsilon}\|_{L^{\infty}(K_0)}) \exp\left(2\int_0^t |\mu^{\varepsilon}(s)|\,ds\right).
\end{align*}
On the right-hand side, the terms involving $v_0^{\varepsilon}$ and $w_0^{\varepsilon}$ are of order $O(\varepsilon^{-p})$ for some $p \ge 0$.
The exponential term is uniformly bounded in $\varepsilon$ by the condition (ii) on $c^{\varepsilon}$.
Hence, the zeroth derivatives of $v^{\varepsilon}$ and $w^{\varepsilon}$ satisfy estimate $(\ref{eqn : moderate})$ on $K_T$.
We can obtain analogous estimates for all derivatives of $v^{\varepsilon}$ and $w^{\varepsilon}$ by differentiating the equations and using the same argument.
Thus, $(v^{\varepsilon})_{\varepsilon \in (0,1]}$ and $(w^{\varepsilon})_{\varepsilon \in (0,1]}$ belong to $\EM([0,\infty) \times \mathbb{R})$.

For the proof of uniqueness, we only need to obtain the zero-order estimates (by Lemma 1.2.3 in \cite{GKOS:2001}), which follow along the same line as above.
\end{proof}

We remark that condition (ii) in Theorem \ref{thm : existence and uniqueness} can be weakened to the requirement that $\exp\left(\int_0^{\infty}|\mu^\eps(t)|\,dt\right) = O(\varepsilon^{-p})$ as $\varepsilon \downarrow 0$ for some $p \ge 0$.

\section{Propagation of singularities}\label{sec : 4}

In this section we study the phenomenon of propagation of singularities in the generalized solution to problem $(\ref{eqn : wave equation})$ with $c(t) = c_0 + (c_1 - c_0)H(t-1)$, $u_0 \equiv 0$ and $u_1 = \delta$, where $c_0, c_1 > 0$, $c_0 \ne c_1$, $H$ is the Heaviside function and $\delta$ is the delta function.
Let us begin by regularizing the coefficient and initial data.
Let $\varphi$ be as in Example \ref{ex : delta} and put
\[
	c^{\varepsilon}(t) = (c \ast \varphi_{\varepsilon})(t)= c_0 + (c_1-c_0)\int_{\mathbb{R}} H(t-1-\varepsilon s)\varphi(s)\,ds.
\]
Then $c^{\varepsilon} \to c$ in $\D^{\prime}(\mathbb{R})$ as $\varepsilon \downarrow 0$, and the family $(c^{\varepsilon})_{\varepsilon \in (0,1]}$ belongs to $\EM([0,\infty) \times \mathbb{R})$.
We define $C \in \G([0,\infty) \times \mathbb{R})$ as the class of $(c^{\varepsilon})_{\varepsilon \in (0,1]}$.
Then $\singsupp_{\Ginf} C = \{t=1\}$.
We also define $U_0 \equiv 0$ and $U_1 \in \G(\mathbb{R})$ as the class of $(\varphi_{\varepsilon})_{\varepsilon \in (0,1]}$, where $\varphi$ again is a mollifier as in Example \ref{ex : delta}. (Actually, the mollifier need not be the same as the one chosen for the regularization of the coefficient $c$.)
Thus we interpret problem $(\ref{eqn : wave equation})$ with $c(t) = c_0 + (c_1 - c_0)H(t-1)$, $u_0 \equiv 0$ and $u_1 = \delta$ as problem $(\ref{eqn : generalized wave equation})$ with $C$, $U_0$ and $U_1$ defined above.
The existence and uniqueness of a generalized solution is then ensured by Theorem \ref{thm : existence and uniqueness}.
As may be seen from the following theorem, the splitting of the singularities occurs at points of discontinuity of the coefficient (see Figure \ref{fig : singular support1 of U}).

\begin{figure}[htbp]
\begin{center}
\includegraphics{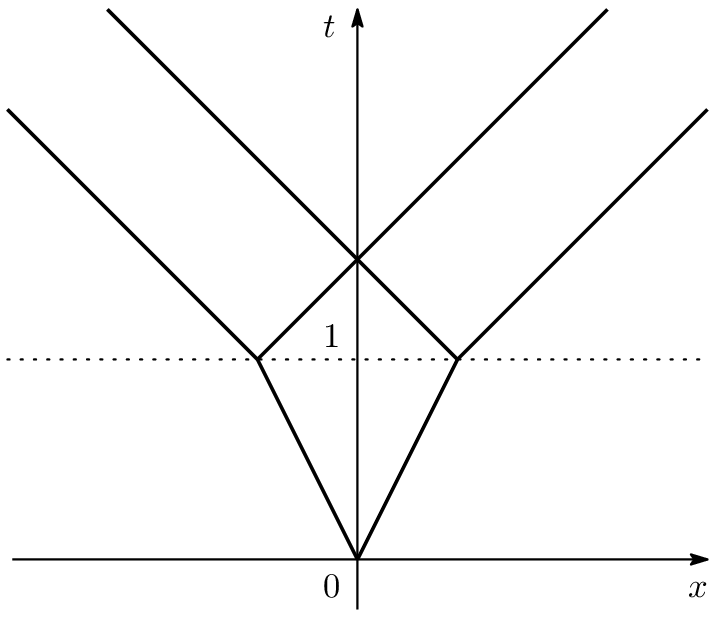}
\end{center}
\caption{The $\Ginf$-singular support of the solution $U$}
\label{fig : singular support1 of U}
\end{figure}

\begin{theorem}\label{thm : propagation1}
Let $C$, $U_0$ and $U_1$ be as above and let $U \in \G([0,\infty) \times \mathbb{R})$ be the solution to problem $(\ref{eqn : generalized wave equation})$.
Then it holds that
\begin{equation}
   \begin{array}{rl}
	\singsupp_{\Ginf} U
	= &\!\!\!\!\left\{(t,x) \mid x = \pm\int_0^t c(s)\,ds,\ t \ge 0\right\} \\[4pt]
	&\!\!\!\!\cup \left\{(t,x) \mid x = \pm\left(2\int_0^1 c(s)\,ds-\int_0^t c(s)\,ds\right),\ t \ge 1\right\}.
    \end{array}
\label{eqn : singular support of U}
\end{equation}
\end{theorem}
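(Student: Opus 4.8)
The plan is to work directly with the family of representatives $(u^\varepsilon)_{\varepsilon\in(0,1]}$, where $u^\varepsilon$ is the classical $\Cinf$-solution of $\partial_t^2u^\varepsilon-c^\varepsilon(t)^2\partial_x^2u^\varepsilon=0$ with $u^\varepsilon|_{t=0}=0$, $\partial_tu^\varepsilon|_{t=0}=\varphi_\varepsilon$, and to determine exactly the set of points near which $(u^\varepsilon)_\varepsilon$ fails to be $\Ginf$-regular. Since the data and the coefficient are even in $x$, so is $u^\varepsilon$; hence $\singsupp_{\Ginf}U$ is symmetric under $x\mapsto-x$, and it suffices to analyse the rays issuing from the origin and from $(1,c_0)$, those from $(1,-c_0)$ being their mirror images. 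I would split into three regimes: $0\le t\le1-\varepsilon$, the interaction layer $|t-1|\le\varepsilon$, and $t\ge1+\varepsilon$. In the first regime $c^\varepsilon\equiv c_0$ on the relevant backward cone, so d'Alembert's formula gives $u^\varepsilon(t,x)=\tfrac1{2c_0}\bigl((H*\varphi_\varepsilon)(x+c_0t)-(H*\varphi_\varepsilon)(x-c_0t)\bigr)$; since $H*\varphi_\varepsilon$ is locally constant off an $\varepsilon$-interval, $u^\varepsilon$ is locally constant off $\{x=\pm c_0t\}$, whereas near $x=c_0t$ one has $\partial_x^ku^\varepsilon=-\tfrac1{2c_0}\varphi_\varepsilon^{(k-1)}(x-c_0t)$, whose supremum over any fixed neighbourhood is of exact order $\varepsilon^{-k}$ and hence is not $O(\varepsilon^{-p})$.

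For the interaction I would pass to the stretched variables $t=1+\varepsilon\sigma$, $x=c_0+\varepsilon\xi$ and set $U^\varepsilon(\sigma,\xi)=u^\varepsilon(1+\varepsilon\sigma,c_0+\varepsilon\xi)$. The decisive point, specific to the piecewise constant coefficient, is that $c^\varepsilon(1+\varepsilon\sigma)=c_0+(c_1-c_0)\Phi(\sigma)=:\widehat c(\sigma)$ with $\Phi(\sigma)=\int_{-\infty}^{\sigma}\varphi(s)\,ds$, \emph{exactly and independently of $\varepsilon$}; hence $U^\varepsilon$ solves $\partial_\sigma^2U^\varepsilon=\widehat c(\sigma)^2\partial_\xi^2U^\varepsilon$, and evaluating the d'Alembert formula at $t=1-\varepsilon$ shows that on every fixed compact set and for $\varepsilon$ small the Cauchy data at $\sigma=-1$ are, again exactly, $U^\varepsilon(-1,\xi)=\tfrac1{2c_0}(1-\Phi(\xi+c_0))$ and $\partial_\sigma U^\varepsilon(-1,\xi)=\tfrac12\varphi(\xi+c_0)$. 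Thus $U^\varepsilon$ coincides on compacts with a single $\varepsilon$-free smooth function $U_0$, and $\partial_t^j\partial_x^ku^\varepsilon=\varepsilon^{-j-k}\partial_\sigma^j\partial_\xi^kU_0$ there; the only features of $U_0$ that matter are the characteristics of $\partial_\sigma^2-\widehat c(\sigma)^2\partial_\xi^2$ emanating from $\{\sigma=-1,\ \xi=-c_0\}$ and the limits $U_0(\sigma,\pm\infty)$. In the interaction layer this already confines the non-$\Ginf$ points to the incoming ray and the two characteristics issuing from $(1,c_0)$.

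To pass to $t\ge1+\varepsilon$ I would note $c^\varepsilon\equiv c_1$ there, so $u^\varepsilon$ is given by d'Alembert's formula from the restart data $u^\varepsilon(1+\varepsilon,\cdot)$, $\partial_tu^\varepsilon(1+\varepsilon,\cdot)$, which by the previous step are \emph{exactly} piecewise constant outside $O(\varepsilon)$-neighbourhoods of $x=\pm c_0$ and, on the $\varepsilon$-scale about $x=c_0$, equal $U_0(1,(x-c_0)/\varepsilon)$, resp.\ $\varepsilon^{-1}\partial_\sigma U_0(1,(x-c_0)/\varepsilon)$. Writing $U_0(\sigma,\xi)=\tfrac12p(\xi-c_1(\sigma-1))+\tfrac12q(\xi+c_1(\sigma-1))$ for $\sigma\ge1$ with $p'=\partial_\xi U_0(1,\cdot)-\tfrac1{c_1}\partial_\sigma U_0(1,\cdot)$ and $q'=\partial_\xi U_0(1,\cdot)+\tfrac1{c_1}\partial_\sigma U_0(1,\cdot)$ (both compactly supported and $C^\infty$ by finite propagation speed), propagation by d'Alembert gives that $u^\varepsilon(t,\cdot)$ is locally constant off the four rays $x=\pm\int_0^tc(s)\,ds$ and $x=\pm(2\int_0^1c(s)\,ds-\int_0^tc(s)\,ds)$, while near the ``$+$'' transmitted ray it equals $\tfrac12p((x-\int_0^tc(s)\,ds)/\varepsilon+O(1))$ plus a locally constant term, and near the ``$+$'' reflected ray $\tfrac12q((x-(2\int_0^1c(s)\,ds-\int_0^tc(s)\,ds))/\varepsilon+O(1))$ plus a locally constant term. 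The conservation identities $\int_\R\partial_\sigma U_0(\sigma,\xi)\,d\xi=\tfrac12$, $U_0(\sigma,+\infty)=0$ and $U_0(\sigma,-\infty)=\tfrac1{2c_0}$ (each constant in $\sigma\ge-1$) then yield $\int_\R p'=-\tfrac{c_0+c_1}{2c_0c_1}\neq0$ and $\int_\R q'=\tfrac{c_0-c_1}{2c_0c_1}$, the latter nonzero \emph{precisely because $c_0\neq c_1$}; since a nonzero compactly supported smooth function has derivatives of every order not identically zero, $p^{(k)}\not\equiv0$ and $q^{(k)}\not\equiv0$ for all $k\ge1$. Hence $\partial_x^ku^\varepsilon$ over a fixed neighbourhood of a point of the transmitted, resp.\ reflected, ray has supremum of exact order $\varepsilon^{-k}\|p^{(k)}\|_\infty$, resp.\ $\varepsilon^{-k}\|q^{(k)}\|_\infty$, not $O(\varepsilon^{-p})$; together with the first regime and the $x$-symmetry this establishes $(\ref{eqn : singular support of U})$.

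I expect the routine parts to be the explicit d'Alembert computations and the verification that the relevant $\varepsilon$-scaled derivatives have modulus of exact order $\varepsilon^{-k}$; I expect two genuine difficulties. The first is a careful treatment of the two scales at the interface: one must check that $U^\varepsilon$ is really $\varepsilon$-free on compacts (not merely convergent), that the far field of $U_0$ is attained \emph{exactly} so that the restart data are exactly piecewise constant off the $\varepsilon$-layers, and that d'Alembert propagation for $t>1+\varepsilon$ neither erases the transmitted/reflected fronts nor produces spurious singularities, which forces one to track domains of dependence on both scales simultaneously. The second is the reflected amplitude: showing $\int_\R q'\neq0$ is the crux where the hypothesis $c_0\neq c_1$ enters, and for an arbitrary admissible mollifier $\varphi$ this is cleanest if read off from the conservation identities for $U_0$ above rather than from any explicit solution formula.
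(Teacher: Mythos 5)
Your argument is correct, but it takes a genuinely different route from the paper's. The paper never returns to the second-order equation: it works throughout with the first-order system in $V=\partial_tU-C\partial_xU$, $W=\partial_tU+C\partial_xU$, represents $(v^{\varepsilon},w^{\varepsilon})$ as fixed points of the integral equations along characteristics, extracts sign information ($v^{\varepsilon}\ge 0$, $w^{\varepsilon}\le 0$ when $c_1>c_0$) from the iteration scheme, and then detects each ray by a two-point difference quotient together with repeated use of the mean value theorem, producing points $x_n^{\varepsilon}$ converging to the ray at which $|\partial_x^n v^{\varepsilon}|$ or $|\partial_x^n w^{\varepsilon}|$ is at least $\mathrm{const}\cdot\varepsilon^{-(n+1)}$. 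There the nontriviality of the reflected singularity is encoded in the constant $\tfrac12\log\beta\cdot\varphi(a)$ with $\beta>1$ precisely because $c_1\ne c_0$, and the case $c_0>c_1$ requires a separate (if short) argument via a sign change and linearity. Your inner-layer rescaling replaces all of this: the observation that $c^{\varepsilon}(1+\varepsilon\sigma)$ is exactly $\varepsilon$-free reduces the interaction to a single fixed profile $U_0$, and the conservation identities give $\int_{\R} q'=(c_0-c_1)/(2c_0c_1)$, which is exactly the classical reflection amplitude (consistent with Section \ref{sec : 6}) and vanishes if and only if $c_0=c_1$. Your approach buys sharper information -- exact $\varepsilon$-profiles along each ray, hence two-sided asymptotics $\sup|\partial_x^k u^{\varepsilon}|\sim c_k\varepsilon^{-k}$, and a sign-free treatment of $c_1-c_0$ -- at the price of relying on d'Alembert's formula and the special structure of the layer; the paper's method is cruder but more robust, extending for instance to powers of the delta function as data (as remarked after the theorem) and to arguments that survive in the system setting. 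The two points you flag as delicate (exactness of the far field of $U_0$, so that the restart data at $t=1+\varepsilon$ are exactly piecewise constant off the $\varepsilon$-layers, and the two-scale matching) are real but are disposed of by finite propagation speed exactly as you indicate, so I see no gap.
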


\begin{proof}
For the sake of presentation, we first assume that $c_1 > c_0$.
To show that assertion $(\ref{eqn : singular support of U})$ holds, we first calculate the $\Ginf$-singular support of the solution $(V,W)$ to problem $(\ref{eqn : generalized first-order hyperbolic system})$ with $V_0 = U_1$ and $W_0 \equiv 0$.
As may be seen from the proof of Theorem \ref{thm : existence and uniqueness}, the solution $(V,W)$ has a representative $(v^{\varepsilon},w^{\varepsilon})_{\varepsilon \in (0,1]}$, which satisfies the integral equations $(\ref{eqn : v^{varepsilon}})$ with $v_0^{\varepsilon} = \varphi_{\varepsilon}$ and $(\ref{eqn : w^{varepsilon}})$ with $w_0^{\varepsilon} = 0$:
\begin{align*}
	v^{\varepsilon}(t,x) & = \varphi_{\varepsilon}(\gamma_+^{\varepsilon}(t,x,0)) + \int_0^t \mu^{\varepsilon}(s)(v^{\varepsilon}-w^{\varepsilon})(s,\gamma_+^{\varepsilon}(t,x,s))\,ds, \\
	w^{\varepsilon}(t,x) & = \int_0^t \mu^{\varepsilon}(s)(w^{\varepsilon}-v^{\varepsilon})(s,\gamma_-^{\varepsilon}(t,x,s))\,ds,
\end{align*}
where
\begin{align*}
	\gamma_+^{\varepsilon}(t,x,s) = \gamma_+^{\varepsilon}(t,x,0) + \int_0^s c^{\varepsilon}(\tau)\,d\tau,\qquad \gamma_+^{\varepsilon}(t,x,t) = x, \\
	\gamma_-^{\varepsilon}(t,x,s) = \gamma_-^{\varepsilon}(t,x,0) - \int_0^s c^{\varepsilon}(\tau)\,d\tau,\qquad \gamma_-^{\varepsilon}(t,x,t) = x.
\end{align*}
The solution $(v^{\varepsilon},w^{\varepsilon})$ is obtained by iteration, see \cite{O:1992}.
From this, we can check that  $v^{\varepsilon}(t,x) \ge 0$ and $w^{\varepsilon}(t,x) \le 0$ for $t \ge 0$ and $x \in \mathbb{R}$.
Using this fact, we get
\begin{align}
	v^{\varepsilon}(t,x) & \ge \varphi_{\varepsilon}(\gamma_+^{\varepsilon}(t,x,0)) \ge 0, \label{eqn : v}\\
	w^{\varepsilon}(t,x) & \le -\int_0^t \mu^{\varepsilon}(s)v^{\varepsilon}(s,\gamma_-^{\varepsilon}(t,x,s))\,ds \notag \\
	& \le -\int_0^t \mu^{\varepsilon}(s)\varphi_{\varepsilon}(\gamma_+^{\varepsilon}(s,\gamma_-^{\varepsilon}(t,x,s),0))\,ds \notag \\
	& = -\int_0^t \mu^{\varepsilon}(s)\varphi_{\varepsilon}\left(\gamma_-^{\varepsilon}(t,x,0) - 2\int_0^s c^{\varepsilon}(\tau)\,d\tau\right)ds \le 0. \label{eqn : w}
\end{align}
{\bf Step 1}. We first prove that
\begin{equation}\label{eqn : singular support of V}
	\singsupp_{\Ginf} V = \left\{(t,x) \mid x = \int_0^t c(s)\,ds,\ t \ge 0\right\}.
\end{equation}
It is easy to check that $V \in \G([0,\infty) \times \mathbb{R})$ vanishes off $\{(t,x) \mid x = \int_0^t c(s)\,ds,\ t \ge 0\}$ and thus is $\Ginf$-regular there.
We will show first that $\{(t,x) \mid x = \int_0^t c(s)\,ds,\ t \ge 0\}$ is contained in $\singsupp_{\Ginf}V$.

Fix $t > 0$ arbitrarily, and consider
\[
	A_{t,\varepsilon}= \left|\dfrac{v^{\varepsilon}(t,\gamma_+^{\varepsilon}(0,\varepsilon,t)) - v^{\varepsilon}(t,\gamma_+^{\varepsilon}(0,0,t))}{\gamma_+^{\varepsilon}(0,\varepsilon,t) - \gamma_+^{\varepsilon}(0,0,t)}\right|.
\]
By inequality $(\ref{eqn : v})$, we have $v^{\varepsilon}(t,\gamma_+^{\varepsilon}(0,0,t)) \ge \varphi(0)/\varepsilon > 0$.
Noting that $v^{\varepsilon}(t,x) = 0$ for $x \ge \gamma_+^{\varepsilon}(0,\varepsilon,t)$, we get $v^{\varepsilon}(t,\gamma_+^{\varepsilon}(0,\varepsilon,t)) = 0$.
Furthermore, $\gamma_+^{\varepsilon}(0,\varepsilon,t) - \gamma_+^{\varepsilon}(0,0,t) = \varepsilon$.
Hence, we see that $A_{t,\varepsilon} \ge \varphi(0)/\varepsilon^2$.
The mean value theorem shows that there exists $x_1^{\varepsilon} \in (\gamma_+^{\varepsilon}(0,0,t),\gamma_+^{\varepsilon}(0,\varepsilon,t))$ such that
\[
	|\partial_x v^{\varepsilon}(t,x_1^{\varepsilon})| \ge \dfrac{\varphi(0)}{\varepsilon^2}.
\]
Repeat this process to find that for any $n \ge 2$ there exists $x_n^{\varepsilon} \in (x_{n-1}^{\varepsilon},\gamma_+^{\varepsilon}(0,\varepsilon,t))$ such that
\[
	\left|\partial_x^n v^{\varepsilon}(t,x_n^{\varepsilon})\right| \ge \dfrac{\varphi(0)}{\varepsilon^{n+1}}.
\]
Since $x_n^{\varepsilon} \to \int_0^t c(s)\,ds$ as $\varepsilon \downarrow 0$, assertion $(\ref{eqn : singular support of V})$ holds.
\smallskip\\
{\bf Step 2}. We next prove that
\begin{equation}\label{eqn : singular support of W}
	\singsupp_{\Ginf} W = \left\{(t,x) \mid x = 2\int_0^1 c(s)\,ds-\int_0^t c(s)\,ds,\ t \ge 1\right\}.
\end{equation}
The proof is similar.
We can easily check that $W$ equals $0$ outside $\{(t,x) \mid x = 2\int_0^1 c(s)\,ds-\int_0^t c(s)\,ds,\ t \ge 1\}$ and thus is $\Ginf$-regular there.
We will show below that $\{(t,x) \mid x = 2\int_0^1 c(s)\,ds-\int_0^t c(s)\,ds,\ t \ge 1\} \subset$ $\singsupp_{\Ginf}W$.

Fix $t > 1$ arbitrarily.
Let $\varepsilon < t - 1$, and consider
\begin{equation*}
	B_{t,\varepsilon}= \left|\dfrac{w^{\varepsilon}(t,\gamma_-^{\varepsilon}(1, \int_0^{1}c^{\varepsilon}(\tau)\,d\tau,t)) - w^{\varepsilon}(t,\gamma_-^{\varepsilon}(1-\varepsilon, -\varepsilon + \int_0^{1-\varepsilon}c^{\varepsilon}(\tau)\,d\tau,t))}{\gamma_-^{\varepsilon}(1, \int_0^{1}c^{\varepsilon}(\tau)\,d\tau,t) - \gamma_-^{\varepsilon}(1-\varepsilon, -\varepsilon + \int_0^{1-\varepsilon}c^{\varepsilon}(\tau)\,d\tau,t)}\right|.
\end{equation*}
We see that, for $s \ge 0$,
\begin{equation}\label{eqn : curve1}
	\gamma_-^{\varepsilon}\left(1,\int_0^1 c^{\varepsilon}(\tau)\,d\tau,s\right) = 2\int_0^1 c^{\varepsilon}(\tau)\,d\tau - \int_0^s c^{\varepsilon}(\tau)\,d\tau.
\end{equation}
Using this, inequality $(\ref{eqn : w})$ and supp\,$\mu^{\varepsilon} \subset [1-\varepsilon,1+\varepsilon]$, we get
\begin{align}
	w^{\varepsilon}\left(t,\gamma_-^{\varepsilon}\left(1, \int_0^{1}c^{\varepsilon}(\tau)\,d\tau,t\right)\right)
	& \le -\int_{1-\varepsilon}^{1+\varepsilon} \mu^{\varepsilon}(s)\varphi_{\varepsilon}\left(2\int_s^1 c^{\varepsilon}(\tau)\,d\tau\right)ds.\label{eqn : w2}
\end{align}
Choose $a \in (0,2c_1)$ so that $\varphi(a) > 0$.
Then, for $1 - \varepsilon \le 1 - (a\varepsilon)/(2c_1) \le s \le 1 + (a\varepsilon)/(2c_1) \le 1 + \varepsilon$, we have $|2\int_{s}^1 c^{\varepsilon}(\tau)\,d\tau| \le a\varepsilon$.
Using this and noting that $\varphi^{\prime} \ge 0$ on $[-1,0]$ and the symmetry of $\varphi$, we find that $\varphi_{\varepsilon}(2\int_{s}^1 c^{\varepsilon}(\tau)\,d\tau) \ge \varphi_{\varepsilon}(a\varepsilon)$ for $1 - (a\varepsilon)/(2c_1) \le s \le 1 + (a\varepsilon)/(2c_1)$.
Therefore, by inequality $(\ref{eqn : w2})$, we obtain
\begin{align*}
	w^{\varepsilon}\left(t,\gamma_-^{\varepsilon}\left(1, \int_0^{1}c^{\varepsilon}(\tau)\,d\tau,t\right)\right)
	& \le -\int_{1-(a\varepsilon)/(2c_1)}^{1 + (a\varepsilon)/(2c_1)} \mu^{\varepsilon}(s)\varphi_{\varepsilon}(a\varepsilon)\,ds\\
	& = -\dfrac{1}{2}\log \dfrac{c^{\varepsilon}(1+(a\varepsilon)/(2c_1))}{c^{\varepsilon}(1-(a\varepsilon)/(2c_1))} \cdot \dfrac{\varphi(a)}{\varepsilon},
\end{align*}
where
\[
	\beta=\dfrac{c^{\varepsilon}(1+(a\varepsilon)/(2c_1))}{c^{\varepsilon}(1-(a\varepsilon)/(2c_1))}
	= \dfrac{c_0 + (c_1-c_0)\int_{-\infty}^{a/(2c_1)} \varphi(y)\,dy}{c_0 + (c_1-c_0)\int_{-\infty}^{-a/(2c_1)} \varphi(y)\,dy}
\]
is independent of $\varepsilon$ and greater than $1$.
Thus, we obtain
\[
	w^{\varepsilon}\left(t,\gamma_-^{\varepsilon}\left(1, \int_0^{1}c^{\varepsilon}(\tau)\,d\tau,t\right)\right) \le -\dfrac{1}{2}\log \beta \cdot \dfrac{\varphi(a)}{\varepsilon} < 0.
\]
We also see that, for $s \ge 0$,
\begin{equation}\label{eqn : curve2}
	\gamma_-^{\varepsilon}\left(1-\varepsilon, -\varepsilon + \int_0^{1-\varepsilon}c^{\varepsilon}(\tau)\,d\tau,s\right) = -\varepsilon + 2\int_0^{1-\varepsilon}c^{\varepsilon}(\tau)\,d\tau - \int_0^{s}c^{\varepsilon}(\tau)\,d\tau,
\end{equation}
on which $w^{\varepsilon} \equiv 0$.
Furthermore, by $(\ref{eqn : curve1})$ and $(\ref{eqn : curve2})$, we have
\begin{align*}
	& \gamma_-^{\varepsilon}\left(1, \int_0^{1}c^{\varepsilon}(\tau)\,d\tau,t\right) - \gamma_-^{\varepsilon}\left(1-\varepsilon, -\varepsilon + \int_0^{1-\varepsilon}c^{\varepsilon}(\tau)\,d\tau,t\right) \\
	& \hspace{1cm} = \varepsilon + 2\int_{1-\varepsilon}^{1}c^{\varepsilon}(\tau)\,d\tau
	\le (1 + 2c_1)\varepsilon.
\end{align*}
Hence, $B_{t,\varepsilon}  \ge (\log \beta \cdot \varphi(a))/(2(1 + 2c_1)\varepsilon^2)$.
By the mean value theorem, there exists $x_1^{\varepsilon} \in (\gamma_-^{\varepsilon}(1-\varepsilon, -\varepsilon + \int_0^{1-\varepsilon}c^{\varepsilon}(\tau)\,d\tau,t), \gamma_-^{\varepsilon}(1, \int_0^{1}c^{\varepsilon}(\tau)\,d\tau,t))$ such that
\[
	|\partial_x w^{\varepsilon}(t,x_1^{\varepsilon})| \ge \dfrac{\log \beta \cdot \varphi(a)}{2(1 + 2c_1)\varepsilon^2}.
\]
Repeating this process gives
that for any $n \ge 2$, there exists $x_n^{\varepsilon} \in (\gamma_-^{\varepsilon}(1-\varepsilon, -\varepsilon + \int_0^{1-\varepsilon}c^{\varepsilon}(\tau)\,d\tau,t), x_{n-1}^{\varepsilon})$ such that
\[
	|\partial_x^n w^{\varepsilon}(t,x_{n}^{\varepsilon})| \ge \dfrac{\log \beta \cdot \varphi(a)}{2(1 + 2c_1)^n\varepsilon^{n+1}}.
\]
Since $x_n^{\varepsilon} \to 2\int_0^1 c(s)\,ds-\int_0^t c(s)\,ds$ as $\varepsilon \downarrow 0$, assertion $(\ref{eqn : singular support of W})$ holds.
\smallskip\\
{\bf Step 3}. We finally prove that assertion $(\ref{eqn : singular support of U})$ holds.
Similarly to Steps 1 and 2, we can show that
\begin{align}
	\singsupp_{\Ginf} V
	& = \left\{(t,x) \mid x = -\left(2\int_0^1 c(s)\,ds-\int_0^t c(s)\,ds\right),\ t \ge 1\right\}, \label{eqn : singular support2 of V}\\
	\singsupp_{\Ginf} W
	& = \left\{(t,x) \mid x = -\int_0^t c(s)\,ds,\ t \ge 0\right\}, \label{eqn : singular support2 of W}
\end{align}
if $V_0 \equiv 0$ and $W_0 = U_1$.
By the definitions of $V$ and $W$, the $\Ginf$-singular support of $U$ coincides with the union of  $(\ref{eqn : singular support of V})$, $(\ref{eqn : singular support of W})$, $(\ref{eqn : singular support2 of V})$ and $(\ref{eqn : singular support2 of W})$.
Thus, assertion $(\ref{eqn : singular support of U})$ follows.

The case $c_0 > c_1$ can be treated by the same arguments, using a change of sign in $U_1$ and invoking the linearity of the equation. The proof of Theorem \ref{thm : propagation1} is now complete.
\end{proof}

We remark that assertion $(\ref{eqn : singular support of U})$ in Theorem \ref{thm : propagation1} still holds when $U_1 \in \G(\mathbb{R})$ is given by the class of $(\varphi_{\varepsilon}^n)_{\varepsilon \in (0,1]}$ with $n \in \N$, i.e., $U_1$ is any power of the delta function.

\section{Slow scale coefficients and regularity along the refracted ray}\label{sec : 5}

We here discuss how the regularity of the coefficient $C$ affects that of the solution $U$ to problem $(\ref{eqn : generalized wave equation})$. As mentioned in Example \ref{ex : delta},
one may regularize the piecewise constant propagation speed $c(t) = c_0 + (c_1 - c_0)H(t-1)$ in such a way that the corresponding element $C\in\G(\R)$ belongs to $\Ginf(\R)$. In fact, it suffices to use a mollifier $\varphi_{h(\eps)}$, where $(1/h(\eps))_{\varepsilon \in (0,1]}$ is a positive slow scale net. It has been shown in \cite{HOP:06} that a bounded element $C$ of $\G(\R)$ belongs to $\Ginf(\R)$ if and only if all derivatives satisfy slow scale bounds.
Consider problem $(\ref{eqn : generalized wave equation})$ with $U_0 \equiv 0$ and $U_1$
given by the class of $(\varphi_{\varepsilon})_{\varepsilon \in (0,1]}$ as in Theorem $\ref{thm : propagation1}$.
Then as may be seen from the following theorem, the refracted rays
\begin{eqnarray*} 
	\Gamma_{-} &=& \left\{(t,x) \mid x = 2\int_0^1 c(s)\,ds - \int_0^t c(s)\,ds,\ t > 1\right\}\\
     &=& \left\{(t,x) \mid x = c_0 - c_1(t-1),\ t > 1\right\}\\[2pt]
    \Gamma_{+} &=& \left\{(t,x) \mid x = -2\int_0^1 c(s)\,ds + \int_0^t c(s)\,ds,\ t > 1\right\}\\
     &=& \left\{(t,x) \mid x = -c_0 + c_1(t-1),\ t > 1\right\}
\end{eqnarray*}
do not belong to $\singsupp_{\Ginf} U$, if $C$ belongs to $\Ginf(\R)$ (see Figure \ref{fig : singular support2 of U}).

\begin{figure}[htbp]
\begin{center}
\includegraphics{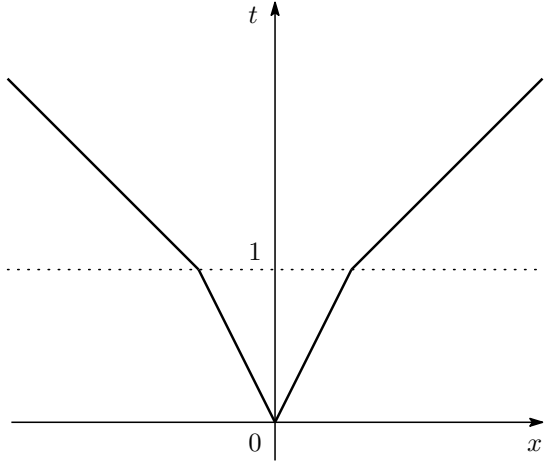}
\end{center}
\caption{The $\Ginf$-singular support of the solution $U$ with slow scale coefficient}
\label{fig : singular support2 of U}
\end{figure}

\begin{theorem}\label{thm : propagation2}
Let $C$, $U_0$ and $U_1$ be as above and let $U \in \G([0,\infty) \times \mathbb{R})$ be the solution to problem
$(\ref{eqn : generalized wave equation})$.
If $C\in\Ginf(\R)$, then
\[
	\singsupp_{\Ginf} U = \left\{(t,x) \mid x = \pm\int_0^t c(s)\,ds,\ t \ge 0\right\}.
\]
\end{theorem}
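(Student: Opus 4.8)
The plan is to follow the proof of Theorem~\ref{thm : propagation1}: pass to the first-order system (\ref{eqn : generalized first-order hyperbolic system}) and, by linearity, treat separately the two pairs of initial data $(V_0,W_0)=(U_1,0)$ and $(V_0,W_0)=(0,U_1)$. The new point will be that the two components carrying the refracted rays $\Gamma_\pm$ in Theorem~\ref{thm : propagation1} (namely $W$ in the first pair and $V$ in the second) are now $\Ginf$-regular on all of $[0,\infty)\times\R$. The inclusion $\{(t,x)\mid x=\pm\int_0^t c(s)\,ds\}\subset\singsupp_{\Ginf}U$ remains valid verbatim, since Step~1 of the proof of Theorem~\ref{thm : propagation1} uses only that $\int_0^\infty|\mu^\eps(s)|\,ds$ stays bounded (true here, as $\int_0^\infty|(c^\eps)'(t)|\,dt=|c_1-c_0|$) together with $v^\eps\ge\varphi_\eps(\gamma_+^\eps(t,x,0))\ge 0$ (valid because $\varphi\ge 0$ forces $c^\eps$ to be monotone, hence $\mu^\eps\ge 0$ for $c_1>c_0$; the case $c_0>c_1$ is reduced by a sign change in $U_1$ as before). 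Moreover the support considerations in the proof of Theorem~\ref{thm : propagation1} — all four characteristic components vanish off small neighborhoods of the incident and refracted rays — show that outside those neighborhoods $U$ is locally constant, hence $\Ginf$, while near a point of $\Gamma_\pm$, which necessarily has $t>1$, the relations $\partial_t U=\tfrac12(V+W)$, $C\partial_x U=\tfrac12(W-V)$ and the fact that $1/C\in\Ginf$ away from $\{t=1\}$ reduce the $\Ginf$-regularity of $U$ to that of the relevant pair $(V,W)$.

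So the heart of the matter is to prove $W\in\Ginf([0,\infty)\times\R)$ for the pair $(V_0,W_0)=(U_1,0)$ (and symmetrically $V\in\Ginf$ for the other pair). I would expand the representative in the Neumann series associated with (\ref{eqn : v^{varepsilon}})--(\ref{eqn : w^{varepsilon}}) (cf.\ \cite{O:1992}): $v^\eps=\sum_{k\ge0}v^{\eps,k}$, $w^\eps=\sum_{k\ge0}w^{\eps,k}$ with $v^{\eps,0}(t,x)=\varphi_\eps(\gamma_+^\eps(t,x,0))$, $w^{\eps,0}\equiv0$, the higher terms obtained by integrating $\mu^\eps(s)(v^{\eps,k-1}-w^{\eps,k-1})$ along $\gamma_\pm^\eps(t,x,s)$ over $s\in[0,t]$. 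Two observations drive the argument. First, the flow identity $\gamma_+^\eps(s,\gamma_+^\eps(t,x,s),0)=\gamma_+^\eps(t,x,0)$ shows that every $\varphi_\eps$-factor occurring in a term $v^{\eps,k}$ still sits on the incident characteristic (for instance $v^{\eps,1}(t,x)=\varphi_\eps(\gamma_+^\eps(t,x,0))\int_0^t\mu^\eps(s)\,ds$), so the incident ray stays the only obstruction to $V\in\Ginf$. Second, every $\varphi_\eps$-factor entering a term $w^{\eps,k}$ is integrated against $\mu^\eps$, and the change of variables $\sigma=\gamma_+^\eps(s,\gamma_-^\eps(t,x,s),0)=x+\int_0^t c^\eps(\tau)\,d\tau-2\int_0^s c^\eps(\tau)\,d\tau$, whose $s$-derivative $-2c^\eps(s)$ is bounded away from $0$, turns for example
\[
	w^{\eps,1}(t,x)=-\int_0^t\mu^\eps(s)\,\varphi_\eps\bigl(\gamma_+^\eps(s,\gamma_-^\eps(t,x,s),0)\bigr)\,ds
	=-\int\frac{\mu^\eps\bigl(G^\eps(t,x,\sigma)\bigr)}{2c^\eps\bigl(G^\eps(t,x,\sigma)\bigr)}\,\varphi_\eps(\sigma)\,d\sigma ,
\]
where $G^\eps$ inverts $s\mapsto\sigma$. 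On a compact set disjoint from the incident rays the endpoints of integration are bounded away from $\supp\varphi_\eps$, so differentiation under the integral only hits $(\mu^\eps/(2c^\eps))\circ G^\eps$.

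The hard part will be to control all derivatives of this integrand, and this is precisely where the slow-scale hypothesis enters. Because $(1/h(\eps))$ is a slow scale net, $|\partial_t^k c^\eps|=O(h(\eps)^{-k})=O(\eps^{-p})$ for \emph{every} $p>0$ and all $k$; combined with $c^\eps\ge c_0>0$ this gives the same bounds for all derivatives of $\mu^\eps/(2c^\eps)$ and, by implicit differentiation, for all derivatives of $G^\eps$. A Faà di Bruno expansion then writes $\partial^\alpha_{(t,x)}\bigl[(\mu^\eps/(2c^\eps))\circ G^\eps\bigr]$ as a finite sum of products of at most $|\alpha|+1$ such factors; since each factor is $O(\eps^{-p})$ for every $p>0$, so is the whole expression — the unavoidable loss of powers under composition is swallowed by any fixed power of $\eps$. (This is exactly the step that \emph{fails} in Theorem~\ref{thm : propagation1}, where $\mu^\eps$ has size $\eps^{-1}$ and the refracted-ray singularity therefore persists.) Hence $\sup_K|\partial^\alpha w^{\eps,1}|\le\|\varphi\|_{L^1}\,O(\eps^{-p})$ for every $p>0$; since passing from $w^{\eps,k-1}$ to $w^{\eps,k}$ only composes the already-$\Ginf$ quantities $\mu^\eps$, $\gamma_\pm^\eps$ and $\int_0^t\mu^\eps(s)\,ds$ with lower-order terms, the same reasoning yields $\|\partial^\alpha w^{\eps,k}\|_{L^\infty(K_T)}= O(\eps^{-1})$ with constants that are summable in $k$, the Neumann series converging by the estimates of Theorem~\ref{thm : existence and uniqueness}. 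Therefore $W=\sum_k w^{\eps,k}\in\Ginf$, and likewise the refracting $V$-component of the second pair; plugging this into the decomposition exactly as in Step~3 of the proof of Theorem~\ref{thm : propagation1} gives $\singsupp_{\Ginf}U=\{x=\int_0^t c\}\cup\{x=-\int_0^t c\}$, so that $\Gamma_\pm$ have dropped out. The case $c_0>c_1$ follows by linearity.
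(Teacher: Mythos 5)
Your proposal is correct in substance but takes a genuinely different route from the paper. You expand the representative in the Picard--Neumann series and control each iterate $w^{\eps,k}$ by the transversal change of variables $s\mapsto\sigma$, so that the concentrated factor $\varphi_\eps$ is integrated out and every derivative falls on slow scale quantities. The paper instead works with the second-order structure directly: it computes the commutator $[\p_t - C\p_x,\p_t+C\p_x]=2C'\p_x$, derives the transport equation $(\p_t-C\p_x)(\p_t+C\p_x)W = 3M(\p_t+C\p_x)W+(M^2-M')(V-W)$, and then estimates the iterated directional derivatives $(\p_t\pm C\p_x)^k(\p_t\mp C\p_x)^\ell W$ by integration along characteristics and Gronwall, using at each stage that $M$ and its derivatives are slow scale and that $V$ vanishes on $\Gamma_-$. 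The common engine is identical -- slow scale nets are stable under products and under further differentiation of $c^\eps$, so each extra derivative costs only a slow scale factor and all derivatives stay $O(\eps^{-N-1})$ for a single $N$ -- and your parenthetical remark that this is precisely what fails in Theorem~\ref{thm : propagation1} is exactly the paper's point. Your route makes the mechanism (transversality of the backward minus-characteristic to the incident ray) very explicit; the paper's route avoids the combinatorics of the alternating characteristic chains in the higher iterates, which is where your sketch is thinnest: for $k\ge 2$ the inner factor $v^{\eps,k-1}$ is \emph{not} slow scale (it still carries $\varphi_\eps$ concentrated on the incident ray), so the induction step is not merely "composing already-$\Ginf$ quantities with lower-order terms" but requires the change-of-variables argument anew at each stage, applied to the innermost $\varphi_\eps$ after unfolding the iterated integral, together with uniformity in $k$ of the derivative bounds. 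Two smaller corrections: the conclusion should be that $W$ is $\Ginf$ on compact sets disjoint from the incident rays, not on all of $[0,\infty)\times\R$ -- near the refraction point $(1,c_0)$ the boundary term $\varphi_\eps(\sigma(t))$ in $\partial^\alpha w^{\eps,1}$ does not vanish and $W$ is genuinely singular there, which is harmless only because that point already lies on the incident ray -- and accordingly your displayed estimate should be taken over such a compact set $K$ rather than over $K_T$.
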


\begin{proof}
As can be seen from the proof of Theorem \ref{thm : propagation1}, it suffices to consider problem
$(\ref{eqn : generalized first-order hyperbolic system})$ when $V_0 = U_1$ and $W_0 \equiv 0$:
\begin{equation}\label{eq:system}
\begin{gathered}
(\p_t + C\p_x)V = MV - MW \qquad {\rm in\ } \G([0,\infty)\times\R),\\
(\p_t - C\p_x)W = MW - MV \qquad {\rm in\ } \G([0,\infty)\times\R),\\
V|_{t=0} = U_1,\quad W|_{t=0} = 0\qquad {\rm in\ } \G(\R).
\end{gathered}
\end{equation}
The same argument as in the proof of Theorem \ref{thm : propagation1} shows that
\[
	\singsupp_{\Ginf} V = \left\{(t,x) \mid x = \int_0^t c(s)\,ds,\ t \ge 0\right\}
\]
holds.
Thus, it remains to show that $W$ is $\Ginf$-regular in a neighborhood of $\Gamma_{-}$.

A simple calculation shows that the commutator of $\p_t - C(t)\p_x$ and $\p_t + C(t)\p_x$ is given by
\[
   \left[ \p_t - C(t)\p_x, \p_t + C(t)\p_x\right] = 2C'(t)\p_x.
\]
Using this commutator relation and the second line of system (\ref{eq:system}), we have
\begin{align*}
(\p_t - C\p_x)&(\p_t + C\p_x)W = (\p_t + C\p_x)(\p_t - C\p_x)W + 2C'\p_xW\\
    &= (\p_t + C\p_x)\big(M(W-V)\big) + 2C'\p_xW\\
    &= M(\p_t + C\p_x)W - M(\p_t + C\p_x)V + M'(W-V) + 2C'\p_xW.
\end{align*}
From the first line of system (\ref{eq:system}),
\[
   (\p_t + C\p_x)V = M(V-W).
\]
Further,
\[
   \p_xW = \frac1{2C}\Big((\p_t + C\p_x)W - (\p_t - C\p_x)W\Big) = \frac1{2C}\Big((\p_t + C\p_x)W + M(V-W)\Big).
\]
Together with $C'/(2C) = M$ this leads to
\[
   (\p_t - C\p_x)(\p_t + C\p_x)W = 3M(\p_t + C\p_x)W +(M^2-M')(V-W).
\]
Let $K_T$ be as in Theorem \ref{thm : propagation1}.
Then the representatives $(v^\eps,w^\eps)$ of $(V,W)$ are of order $\eps^{-N}$ on $K_T$ for some $N$. Further, $M$ and its derivatives are slow scale by assumption;
hence $|(\mu^\eps)^2 - (\mu^\eps)'| \leq \lambda_1(\eps)$ for some slow scale net $(\lambda_1(\eps))_{\eps \in (0,1]}$.
From integration along characteristics and Gronwall's inequality as in the proof of Theorem \ref{thm : existence and uniqueness}, we get
\[
   \|(\p_t + c^\eps\p_x)w^\eps\|_{L^\infty(K_T)} \leq \lambda_1(\eps)\eps^{-N} T\exp\left(\int_0^T 3|\mu^\eps(s)|\,ds\right) = {\mathcal O}(\lambda_1(\eps)\eps^{-N})
\]
since $\int_0^T |\mu^\eps(s)|\,ds$ is bounded by construction.
Recursively we find that
\[
 (\p_t - C\p_x)(\p_t + C\p_x)^k W = (2k+1)(\p_t + C\p_x)^k W + L(V,W,M,C)
\]
where $L$ depends linearly on $V$, $W$, $(\p_t + C\p_x)W, \ldots, (\p_t + C\p_x)^{k-1}W$ and may contain derivatives of $C$, $M$ and their products and powers.
The linearity of $L$ in the first variables and the slow scale property of $C,M$ and their derivatives yield by induction
that
\[
\|(\p_t + c^\eps\p_x)^{k}w^\eps\|_{L^\infty(K_T)} = {\mathcal O}(\lambda_k(\eps)\eps^{-N})
\]
for some slow scale net $(\lambda_k(\eps))_{\eps \in (0,1]}$.

For the remainder of the proof we may use the fact -- already shown in Theorem \ref{thm : propagation1} -- that $V$ vanishes on $\Gamma_{-}$, hence
\[
   (\p_t - C\p_x)W = MW, \quad (\p_t - C\p_x)^2W = M(\p_t - C\p_x)W + M'W, \ldots
\]
along $\Gamma_{-}$. Again the higher order directional derivatives of $W$ in the direction $\p_t - C\p_x$ can be estimated inductively and bounded by some
slow scale net times $\eps^{-N}$. Similarly, all mixed derivatives $(\p_t \pm C\p_x)^k(\p_t \mp C\p_x)^\ell W$ can be estimated by reduction to previously
computed terms. In conclusion, all derivatives of $W$ can be bounded by some slow scale net times $\eps^{-N}$, hence are of order ${\mathcal O}(\eps^{-N-1})$.
Thus $W$ is $\Ginf$ along $\Gamma_{-}$.
\end{proof}

We remark that the result can be generalized in various ways. For example, the integral bound on $|\mu^\eps|$ can be replaced by the requirement that the net $\exp \left(\int_0^{\infty}|\mu^\eps(t)|\,dt\right)$ is slow scale. The proof is expected to go through for arbitrary initial data $U_1$ whose $\Ginf$-singular support is $\{0\}$.

\section{Associated distributions}\label{sec : 6}

We consider again the wave equation in one space dimension with propagation speed $c = c(t)$ depending on time, i.e.,
\begin{equation}\label{eq:wave1D}
   \p^2_t u - c(t)^2\p^2_x u = 0
\end{equation}
with initial conditions
\begin{equation}\label{eq:ICwave1D}
   u|_{t=0} = u_0,\qquad \p_t u|_{t=0} = u_1.
\end{equation}
If $c(t)$ is piecewise constant, say $c(t) = c_0 + (c_1 - c_0)H(t-1)$, and $u_0, u_1 \in \D'$, this problem has a unique solution
\begin{equation}\label{eq:piecewise}
   u \in {\mathcal C}^1([0,\infty):\D'(\R)) \cap \big({\mathcal C}^2((0,1):\D'(\R)) \oplus {\mathcal C}^2((1,\infty):\D'(\R))\big).
\end{equation}
Thus the problem is interpreted as a transmission problem across a discontinuity at $t=1$; the transmission condition is that the solution should be a continuously differentiable function of time with values in $\D'(\R)$.
It is simply obtained by solving the wave equation for $t < 1$ and for $t > 1$ and taking the terminal values at $t = 1$ as initial values for $t>1$.

On the other hand, imbedding $u_0$, $u_1$ and $c$ into the Colombeau algebra by convolution with suitable compactly supported mollifiers (concerning $c$, we use a mollifier as in Example \ref{ex : delta}), problem (\ref{eq:wave1D}), (\ref{eq:ICwave1D}) has a unique solution $U \in \G([0,\infty)\times\R)$. We are going to show that the Colombeau solution is associated with
the piecewise distributional solution.

\begin{theorem}\label{thm : associated distribution}
Let $c(t)$ be a piecewise constant, strictly positive function and let $u_0, u_1 \in \D'(\R)$. Then the corresponding generalized solution $U \in \G([0,\infty)\times\R)$ is associated with the piecewise distributional solution $u$ satisfying (\ref{eq:piecewise}).
\end{theorem}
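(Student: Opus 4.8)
The plan is to argue at the level of regularisations. Writing $(u^\eps)_{\eps\in(0,1]}$ for the representative of $U$ obtained by mollifying $u_0$, $u_1$ and $c$, it suffices to show that $u^\eps\to u$ in $\D'([0,\infty)\times\R)$, where $u$ is the piecewise distributional solution from (\ref{eq:piecewise}); since two representatives of the same element of $\G$ differ by a net converging to $0$ in $\D'$, this establishes $U\approx u$ independently of the choice of mollifiers. One may assume a single jump, say $c(t)=c_0+(c_1-c_0)H(t-1)$, the general case following by iterating the argument across the finitely many jump points. I keep the second-order form $\p_t^2u^\eps=(c^\eps(t))^2\p_x^2u^\eps$ (equivalently, the first-order system of Theorem~\ref{thm : existence and uniqueness}) and rely on energy estimates.

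First I would establish bounds on $u^\eps$ that are \emph{uniform in} $\eps$. Put $\mu^\eps=(c^\eps)'/(2c^\eps)$ as in the proof of Theorem~\ref{thm : existence and uniqueness}; since $(c^\eps)'$ has constant sign, $\int_0^\infty|\mu^\eps(t)|\,dt=\tfrac12|\log(c_1/c_0)|$, uniformly in $\eps$. For data in $H^1\times L^2$, differentiating the energy $E^\eps(t)=\tfrac12\int\big((\p_tu^\eps)^2+(c^\eps)^2(\p_xu^\eps)^2\big)\,dx$ and integrating by parts in $x$ (the coefficient being $x$-independent) gives $\tfrac{d}{dt}E^\eps(t)\le 4|\mu^\eps(t)|E^\eps(t)$, so Gronwall's inequality and the uniform bound on $\int|\mu^\eps|$ yield that $u^\eps$ is bounded in $C([0,T];H^1)\cap C^1([0,T];L^2)$, uniformly in $\eps$, for every $T$. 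For distributional $u_0,u_1$ one reduces to this case: by finite speed of propagation it is enough to treat compactly supported data, and a compactly supported distribution of finite order can be written as $\p_x^Ng_0$ with $g_0\in H^1$ (resp.\ $\p_x^Ng_1$, $g_1\in L^2$) for $N$ large; since the equation and convolution in $x$ commute with $\p_x$, the net $u^\eps$ equals $\p_x^N$ of a net bounded in $C([0,T];H^1)\cap C^1([0,T];L^2)$. Hence, for some $N$, $u^\eps$ is bounded in $C([0,T];H^{1-N}_{\mathrm{loc}})\cap C^1([0,T];H^{-N}_{\mathrm{loc}})$ uniformly in $\eps$, and, since $\p_t^2u^\eps=(c^\eps)^2\p_x^2u^\eps$, also in $C([0,T];H^{-1-N}_{\mathrm{loc}})$.

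Next I would pass to the limit. The bounds on $u^\eps$, $\p_tu^\eps$, $\p_t^2u^\eps$ together with the compact embedding $H^{1-N}_{\mathrm{loc}}\hookrightarrow H^{-N}_{\mathrm{loc}}$ allow, by an Arzel\`a--Ascoli argument, to extract a subsequence with $u^{\eps_j}\to u^\ast$ in $C([0,T];H^{-N}_{\mathrm{loc}})$ and $\p_tu^{\eps_j}\to\p_tu^\ast$ in $C([0,T];H^{-1-N}_{\mathrm{loc}})$; in particular $u^\ast\in C^1([0,\infty);\D'(\R))$, so the transmission requirement is met by the limit automatically. On $\{t<1\}$ we have $c^\eps=c_0$ once $\eps$ is small, so d'Alembert's formula gives $u^\eps\to u$ there, with $u$ the wave solution for speed $c_0$ and data $u_0,u_1$; thus $u^\ast=u$ on $\{t<1\}$, and by continuity $u^\ast(1,\cdot)=u(1^-,\cdot)$, $\p_tu^\ast(1,\cdot)=\p_tu(1^-,\cdot)$. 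On $\{t>1\}$ we have $c^\eps=c_1$ for small $\eps$; since $u^{\eps_j}(1+\eps_j,\cdot)\to u^\ast(1,\cdot)$ and $\p_tu^{\eps_j}(1+\eps_j,\cdot)\to\p_tu^\ast(1,\cdot)$ in $\D'(\R)$ (using the Lipschitz-in-$t$ estimates coming from the uniform bounds), d'Alembert's formula for speed $c_1$ started at time $1+\eps_j$ shows that $u^\ast$ on $\{t>1\}$ is the wave solution for speed $c_1$ with data $u^\ast(1,\cdot)$, $\p_tu^\ast(1,\cdot)$ at $t=1$. Therefore $u^\ast$ satisfies (\ref{eq:wave1D}), (\ref{eq:ICwave1D}) in the piecewise sense (\ref{eq:piecewise}); by uniqueness of that solution $u^\ast=u$, independently of the subsequence, whence the whole net converges, $u^\eps\to u$ in $\D'([0,\infty)\times\R)$, i.e.\ $U\approx u$.

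I expect the crux to be the second step, the uniform-in-$\eps$ energy bounds for merely distributional data: uniformity across the thin transition layer $\{1-\eps\le t\le 1+\eps\}$ hinges on $\int|\mu^\eps|$ staying bounded (the same feature driving Theorem~\ref{thm : existence and uniqueness}), while the passage from rough data to the $H^1\times L^2$ setting via finite propagation speed and a structure-theorem representation $u_0=\p_x^Ng_0$ is routine but is precisely what makes the compactness argument, and with it the automatic appearance of the $C^1$-in-time transmission condition in the limit, available. Once these bounds are in hand, the identification of the limit with the piecewise solution on either side of $t=1$ is essentially immediate from d'Alembert's formula.
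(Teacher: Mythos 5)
Your proposal is correct and follows essentially the same route as the paper: energy estimates made uniform in $\eps$ by the $L^1$-bound on $c^\eps(c^\eps)'$ (equivalently on $\mu^\eps$), an Ascoli-type compactness argument producing a $C^1$-in-time limit, identification of that limit with the piecewise distributional solution by uniqueness, and reduction of rough data to the energy setting via $x$-antiderivatives together with cut-off and finite propagation speed. The only (cosmetic) difference is that the paper first proves association for $\mathcal{C}^2\times\mathcal{C}^1$ data and then transfers it to distributional data by applying $\p_x^n$ to both the Colombeau and the piecewise solutions, whereas you carry the negative-order Sobolev bounds directly through the compactness step.
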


\begin{proof}
We first assume that $u_0 \in {\mathcal C}^2(\R)$ and $u_1 \in {\mathcal C}^1(\R)$ and that both have compact support.
Fix some $T > 1$. Let $(u^\eps)_{\eps \in (0,1]}$ be a representative of the generalized solution that vanishes for $x$ outside some compact set, independently of $t$, $0\leq t \leq T$.
This and the smoothness imply that $u^\eps$ belongs to ${\mathcal C}^\infty\big([0,T):H^\infty(\R)\big)$. Thus we can use energy estimates. Multiplying the wave equation
$u^\eps_{tt} - c^\eps(t)^2u^\eps_{xx} = 0$ by $u^\eps_t$ and integrating by parts we get
\[
    \int_\R\big(u^\eps_{tt}u^\eps_t + (c^\eps)^2 u^\eps_xu^\eps_{xt}\big)\,dx = 0.
\]
Observing that
\[
    \frac12 \frac{d}{dt}(c^\eps u^\eps_x)^2 = (c^\eps)^2 u^\eps_xu^\eps_{xt} + c^\eps (c^\eps)'(u^\eps_x)^2
    \]
we obtain that
\[
   \frac12\frac{d}{dt}\int_\R\big(|u^\eps_t|^2 + (c^\eps)^2 |u^\eps_x|^2\big)\,dx = \int_\R c^\eps (c^\eps)'|u^\eps_x|^2\,dx
\]
for $t\in[0,T)$ and thus the energy estimate
\[
   \int_\R\big(|u^\eps_t|^2 + (c^\eps)^2 |u^\eps_x|^2\big)\,dx \leq \int_\R\big(|u^\eps_{1}|^2 + (c^\eps)^2 |u^\eps_{0x}|^2\big)\,dx
   + 2\int_0^t |c^\eps (c^\eps)'|\int_\R |u^\eps_x|^2\,dx\,dt.
\]
By assumption, $c^\eps (c^\eps)'$ is bounded in $L^1(0,T)$, and so Gronwall's inequality shows that $\int_\R|u^\eps_x(t,x)|^2\,dx$ and hence also $\int_\R|u^\eps_t(t,x)|^2\,dx$ remain
bounded on $[0,T]$, uniformly in $\eps$.
In particular, the family $(u^\eps)_{\eps \in (0,1]}$ is bounded in ${\mathcal C}\big([0,T):H^1(\R)\big)$ as well as in ${\mathcal C}^1\big([0,T):L^2(\R)\big)$. By construction, the supports of the functions $u^\varepsilon$ are contained in a common bounded set. Thus the first property implies that $(u^\eps(\cdot,t))_{\eps \in (0,1]}$ is relatively compact in $L^2(\R)$ for every $t$. The second property shows that $(u^\eps)_{\eps \in (0,1]}$ is an equicontinuous subset of
${\mathcal C}\big([0,T):L^2(\R)\big)$. By Ascoli's theorem, the net $(u^\eps)_{\eps \in (0,1]}$ is relatively compact in ${\mathcal C}\big([0,T):L^2(\R)\big)$.

Taking $x$-derivatives in equation (\ref{eq:wave1D}) we can apply the same argument to $u^\eps_x$ and conclude that both $\int_\R|u^\eps_{xt}(t,x)|^2\,dx$ and
$\int_\R|u^\eps_{xx}(t,x)|^2\,dx$ remain bounded on $[0,T]$, uniformly in $\eps$. By the differential equation (\ref{eq:wave1D}), the same is true of
$\int_\R|u^\eps_{tt}(t,x)|^2\,dx$. By the same argument as above, $(u^\eps_t)_{\eps \in (0,1]}$ is relatively compact in ${\mathcal C}\big([0,T):L^2(\R)\big)$ and so $(u^\eps)_{\eps \in (0,1]}$ is relatively compact in ${\mathcal C}^1\big([0,T):L^2(\R)\big)$.

There exists a subsequence $(u^{\eps_k})_k$ such that
\[
   \lim_{k\to\infty} u^{\eps_k} = \overline{u} \in {\mathcal C}^1\big([0,T):L^2(\R)\big) \subset {\mathcal C}^1([0,T):\D'(\R)).
\]
But on every compact subinterval of $(0,1)$ and of $(1,T)$, $c^\eps$ is identically equal to $c_0$ or $c_1$, respectively, when $\eps$ is sufficiently small.
This implies that $\overline{u}$ is a distributional solution of the wave equation (\ref{eq:wave1D}) on both strips. Since
$\overline{u} \in {\mathcal C}\big([0,T):D'(\R)\big)$, so is $\overline{u}_{xx}$. From the equation, we get that
\[
   \overline{u} \in \big({\mathcal C}^2((0,1):\D'(\R)) \oplus {\mathcal C}^2((1,\infty):\D'(\R))\big).
\]
In other words, $\overline{u}$ is the unique piecewise distributional solution to (\ref{eq:wave1D}), (\ref{eq:ICwave1D}). Consequently, the whole
net $(u^\eps)_{\eps \in (0,1]}$ converges to $u = \overline{u}$.

If $u_0 \in {\mathcal C}^2(\R)$ and $u_1 \in {\mathcal C}^1(\R)$ do not have compact support, we consider an arbitrary rectangle $[-R,R]\times [0,T]$ and take a cut-off function $\chi(x)$ identically equal to 1 on a neighborhood of $[-R - T\max_{0\leq t \leq T} c(t), R + T\max_{0\leq t \leq T} c(t)]\times [0,T]$. By the derivation above, the Colombeau solution with initial data $\chi u_0$, $\chi u_1$ is associated with the piecewise distributional solution with the corresponding initial data. But both solutions coincide with the corresponding solutions without cut-off on the rectangle $[-R,R]\times [0,T]$, by finite propagation speed. This shows that the association result holds without the assumption of compact support.

In the next to last step, take $u_0, u_1 \in \D'(\R)$ and assume that they are distributions of finite order. Write $u_0 = u_0^- + u_0^+$, where $u_0^-$ has its support bounded from the right, and $u_0^+$ has support bounded from the left, and similarly for $u_1$. There is an integer $n$ such that $u_0^+\ast I_n \in {\mathcal C}^2(\R)$ and $u_1^+\ast I_n \in {\mathcal C}^1(\R)$, where $I_n$ is the $n$-fold convolution of the Heaviside function with itself; a similar assertion holds for $u_0^-\ast\check{I}_n$ and $u_0^-\ast\check{I}_n$. We may assume without loss that $u_0^- = u_1^- = 0$. Let $u$ and $U$ be the piecewise distributional solution and the Colombeau solution with initial data $u_0, u_1$, respectively.
Let $\widetilde{u}$ and $\widetilde{U}$ be the piecewise distributional solution and the Colombeau solution with initial data $u_0\ast I_n$ and $u_1\ast I_n$, respectively. By the previous step, $\widetilde{U}$ is associated with $\widetilde{u}$. But $\p_x^n\widetilde{U} = U$ and $\p_x^n\widetilde{u} = u$, since both satisfy the wave equation with initial data
$u_0, u_1$ in the appropriate settings. Therefore, $U$ is associated with $u$ as well.

Finally, if $u_0, u_1 \in \D'(\R)$ are arbitrary distributions, we use the fact that they are locally of finite order and argue by cut-off and finite propagation speed as above.
\end{proof}

\begin{remark}
The compactness argument in the first step of the proof can be set up in various spaces. For example, the uniform boundedness of $\int_\R|u^\eps_{xt}(t,x)|^2\,dx$ on $[0,T]$ implies
the equicontinuity of $(u^\eps)_{\eps \in (0,1]}$ in ${\mathcal C}\big([0,T):H^1(\R)\big)\subset {\mathcal C}\big([0,T):{\mathcal C}(\R)\big)$. Hence $(u^\eps)_{\eps \in (0,1]}$ is actually relatively compact in
${\mathcal C}\big([0,T):{\mathcal C}(\R)\big) = {\mathcal C}\big(\R\times[0,T)\big)$.
\end{remark}

\subsection*{Acknowledgments}
The first author expresses his most heartfelt thanks to G\"{u}nther H\"{o}rmann for the warm hospitality during his visit to the Fakult\"{a}t f\"{u}r Mathematik, Universit\"{a}t Wien from October 2, 2009 to March 31, 2011.
He also gives his deep appreciation to Michael Oberguggenberger for the warm hospitality during his several visits to Universit\"{a}t Innsbruck in the past years.

\end{document}